\theoremstyle{theorem}
\newtheorem{theorem}{Theorem}[section]
\newtheorem{proposition}[theorem]{Proposition}
\newtheorem{problem}[theorem]{Problem}
\numberwithin{equation}{section}
\theoremstyle{remark}
\newtheorem{remark}[theorem]{Remark}
\newtheorem{example}[theorem]{Example}
\newcommand{\cp}{\mathbb{C}P}
\newcommand{\qq}{\mathbb{Q}}
\begin{document}

\title[Complete intersections with different Hodge numbers]{Examples of diffeomorphic complete intersections with different Hodge numbers}
{\thanks{This project was supported by the Natural Science Foundation of Tianjin City of China, No. 19JCYBJC30300.}
\author{Jianbo Wang, Zhiwang Yu, Yuyu Wang}
\address{(Jianbo Wang) School of Mathematics, Tianjin University, Tianjin 300350, China}
\email{wjianbo@tju.edu.cn}
\address{(Zhiwang Yu) School of Mathematics, Tianjin University, Tianjin 300350, China}
\email{yzhwang@tju.edu.cn}
\address{(Yuyu Wang) College of Mathematical Science, Tianjin Normal University, Tianjin 300387, China}
\email{wdoubleyu@aliyun.com}
\date{\today}
\maketitle

\begin{abstract}
In this paper, we give three pairs of complex 3-dim complete intersections and a pair of complex 5-dim complete intersections, and every pair of them is diffeomorphic but with different Hodge numbers. Moreover, the diffeomorphic complex 3-dim complete intersections  have different Chern mumbers $c_1^3, c_1c_2$.
\end{abstract}

\section{Introduction}

The question of the topological invariance of Hodge numbers was firstly raised by Hirzebruch in 1954. His problem list \cite{Hi1954} contains the following question about the Hodge and Chern numbers of algebraic varieties, listed there as {\bf Problem 31}.

\vskip 2mm
 {\bf Problem 31.} (Hirzebruch) Are the $h^{p,q}$ and the Chern characteristic numbers of an algebraic variety $V_n$ topological invariants of $V_n$? If not, determine all those linear combinations of the $h^{p,q}$ and the Chern characteristic numbers which are topological invariants.

\vskip 2mm
For K\"ahler manifolds, the Hodge numbers have the same properties and symmetries as on algebraic varieties, so it is still interesting to consider the {\bf Problem 31} on K\"ahler manifolds. The question related the {\bf Problem 31} has also been raised repeatedly in mathoverflow posting by 
\href{https://mathoverflow.net/questions/42744/diffeomorphic-k\%C3\%A4hler-manifolds-with-different-hodge-numbers}{S\'andor Kov\'acs}, 
\href{https://mathoverflow.net/questions/42709/question-about-hodge-number/42735#42735}{Yunhyung Cho}, \href{https://mathoverflow.net/questions/173066/hodge-numbers-of-diffeomorphic-complete-intersections}{David C}, asking:

\begin{problem}\label{mainProblem}
Whether the Hodge numbers of K\"ahler manifolds are diffeomorphism invariants. 
\end{problem}

More details about the progress and solutions of Hirzebruch's 1954 problem list refer to Kotschick's survey paper \cite{Kot-arxiv13}. The
{\bf Problem 31} for the Chern numbers was completely solved in \cite{Kot09, Kot12}.
Kotschick and Schreieder answered the {\bf Problem 31} and \Cref{mainProblem} for Hodge numbers and  mixed linear combinations of Hodge and Chern numbers in \cite[Theorem 3 \& 4]{KotSch13}.

Firstly, let's briefly introduce the Chern numbers and Hodge numbers.
Assume $V$ is a compact complex $n$-dim manifold. 
The Chern number is defined to be the integer 
\[
c_{i_1}\cdots c_{i_r}:= \langle c_{i_1}(TV)\cdots c_{i_r}(TV),[V]\rangle,
\]
where $i_1+\cdots+i_r=n$, $TV$ is the tangent bundle of $V$, and $[V]$ is the fundamental homology class determined by the preferred orientation. 
The top Chern number $c_n$ is the Euler characteristic. Thus, it is a diffeomorphism invariant. For complex surfaces, the two Chern numbers $c_2$ and $c_1^2$ are diffeomorphism invariants (\cite{Kot07}). In \cite{BoHi59}, Borel and Hirzebruch showed that the homogeneous space $SU(4)/(SU(2) \times U(1) \times U(1))$ has two invariant structures as a Hodge manifold with different values for the Chern number $c_1^5$,
 which implies that $c_1^5$ is not a diffeomorphism invariant. In \cite{KotTer09},  Kotschick and Terzi\'c generalized the example in \cite{BoHi59} to $SU(n+2)/(SU(n) \times U(1) \times U(1))$, which determines different Chern numbers $c_1^{2n+1}$. Kotschick showed that both $c_1c_2$ and $c^3_1$ are not diffeomorphism invariants of simply connected projective algebraic 3-folds (\cite[Proposition 1]{Kot07}). 
 
Let $\Omega_V^p$ denote the  sheaf of holomorphic $p$-forms. Then the $(p,q)$-th Hodge number is
\[
h^{p,q}(V):=\dim H^{q}(V, \Omega_{V}^{p}). 
\]
Note that $h^{p,q}(V)=0$ if  $p$ or $q$ is greater than $n$. 
Suppose that $V$ is a compact K\"ahler manifold, then its Hodge numbers satisfy the K\"ahler symmetries: 
\[
h^{q,p}(V)=h^{p,q}(V)=h^{n-p,n-q}(V).
\]
The $r$-th Betti number $b_r(V)$ satisfies that
\[
b_r(V)=\sum_{p+q=r}h^{p,q}(V). 
\]
Thus, 
\[
b_0(V) =h^{0,0}(V),~b_1(V) =2h^{0,1}(V). 
\]
It implies that $h^{0,0}(V)$ and $h^{0,1}(V)$ are topological invariants for any compact K\"ahler manifold  modulo the K\"ahler symmetries. 
It is also well known that Hodge numbers are topological invariants of compact K\"ahler curves and surfaces. 

In \cite{Xiao}, Xiao found two simply connected complex surfaces $S$ and $S^{\prime}$ with different Hodge numbers, which are homeomorphic but not diffeomorphic. Moreover, in \cite{Campana},  Campana pointed out that $S\times S$ and $S^{\prime}\times S^{\prime}$ are orientedly diffeomorphic, and of course still have different Hodge numbers. 

However, just like Kotschick stated in \cite[p. 1301-1302]{Kot12}, there are few examples which show that certain Chern numbers and Hodge numbers are not diffeomorphism invariants. 
In \Cref{examples},  we present four such examples. These examples are related to the smooth complete intersections, which are compact K\"ahler manifolds. 
More concretely, 
\begin{enumerate}
\item \Cref{3-fold} contains three pairs of diffeomorphic complex $3$-dim complete intersections, which have different Chern numbers $c_1^3, c_1c_2$ and different Hodge numbers $h^{0,3}, h^{1,2}$. 
\item \Cref{5-fold} contains a pair of diffeomorphic complex $5$-dim complete intersections, which has different Hodge numbers $h^{2,3}$ and $h^{1,4}$.
\end{enumerate}
The Hodge numbers of complete intersections can be calculated by using the Sage software, and the code is offered by Pieter Belmans  in the link \url{https://github.com/pbelmans/hodge-diamond-cutter}.

{\bf Acknowledgements.} The authors would like to thank Pieter Belmans for sharing the Sage code to calculate the Hodge numbers of complete intersections.

\section{Chern numbers and Hodge numbers of complete intersections}

A complete intersection $X_{n}(\underline{d}) \subset \cp^{n+r}$ is the transversal intersection of $r$ complex hypersurfaces of degrees $d_1,\dots,d_r$, where $\underline{d}=(d_{1}, \dots, d_{r})$ is called the multidegree. The product of the degrees is called the total degree, which is denoted as $d:=d_1\cdots d_r$. The Chern numbers of $X_n(\underline{d})$ can be described as the polynomials of the dimension $n$ and the degrees $d_1,\dots,d_r$ (\cite{Wang}).
In particular, for a complete intersection $X_3(d_1,\dots,d_r)$, the Chern numbers $c_1c_2, c^3_1$ are as follows:
\begin{align*}\label{c13c1c2}
c_1^3 & =d\cdot\Big(4+r-\sum_{i=1}^rd_i\Big)^3,\\
c_1c_2 & =\frac{d}{2}\cdot\Big(4+r-\sum_{i=1}^rd_i\Big)\Big(\big(4+r-\sum_{i=1}^rd_i\big)^2-\big(4+r-\sum_{i=1}^rd_i^2\big)\Big).
\end{align*} 

Let $V$ be a compact complex $n$-dim manifold, the $p$-th holomorphic Euler number of $V$ is defined as:
\begin{equation*}
\chi^{p}(V)=\sum_{q=0}^{n}(-1)^{q} h^{p, q}(V).
\end{equation*}
The $\chi_y$-genus of $V$ is (\cite[\S 15.5]{Hi1978})
\[
\chi_y(V)=\sum_{p=0}^n\chi^p(V)y^p=\sum_{p=0}^n\sum_{q=0}^n (-1)^qh^{p,q}(V)y^p.
\]
The Hodge numbers $\{h^{p,q}(V)\}$ can be assembled into generating functions called the Hodge-Poincar\'e polynomials
\begin{equation*}
HP_{x, y}(V)=\sum_{p,q=0}^\infty h^{p,q}(V) x^{p} y^{q}.
\end{equation*}
By \cite[\S 22.1]{Hi1978}, the $\chi_y$-characteristic of trivial line bundle over $X_{n}(d_{1}, \dots, d_{r})$ is as follows:
\begin{align*}
& \sum_{n=0}^\infty \chi_{y}\left(X_{n}(d_{1}, \dots, d_{r})\right) z^{n+r}\\
=~& \frac{1}{(1+z y)(1-z)} \prod_{j=1}^{r} \frac{(1+z y)^{d_i}-(1-z)^{d_i}}{(1+z y)^{d_i}+y(1-z)^{d_i}}. 
\end{align*}

Every complete intersection is a smooth complex projective variety, and hence is a compact K\"ahler manifold. The Hodge numbers of $X_n(\underline{d})$ satisfy the K\"ahler constraints 
\[
h^{q,p}(X_n(\underline{d}))=h^{p,q}(X_n(\underline{d}))=h^{n-p,n-q}(X_n(\underline{d})).
\]
By \cite[Theorem 22.1.2]{Hi1978}, the Hodge number $h^{p,q}(X_n(\underline{d}))$ is trivially computable if $p+q \neq n $, which is given by 1 if $p=q\leqslant n$ and zero otherwise. The middle case, $p+q=n$ is more interesting. 
\begin{align*}
\chi^{p}\left(X_n(\underline{d})\right) & =(-1)^{n-p} h^{p, n-p}\left(X_n(\underline{d})\right)+(-1)^{p} \text { for } 2 p \neq n, \\
\chi^{m}\left(X_n(\underline{d})\right) & =(-1)^{m} h^{m, m}\left(X_n(\underline{d})\right) \text { for } 2m=n.
\end{align*}
The Theorem 2.3 in \cite[Exposé XI]{SGA7} shows that the degree $n=p+q$ part of $HP_{x, y}(X_n(\underline{d}))$ coincides with the degree $n$ part of the series
\begin{equation}\label{xyseries}
\frac{1}{(1+x)(1+y)}\left[\prod_{i=1}^r \frac{(1+x)^{d_{i}}-(1+y)^{d_{i}}}{(1+y)^{d_{i}} x-(1+x)^{d_{i}} y}-1\right]+\frac{1}{1-x y}.
\end{equation}
That is, $h^{p, q}(X_n(\underline{d}))$ is given by the coefficient of $x^{p} y^{q}$ in the power series \eqref{xyseries}.

\section{Examples}\label{examples}

The following diffeomorphic complete intersections can be found in \cite{Wang,WangDu}, and their Hodge numbers are calculated by the Sage software.

\begin{example} \label{3-fold}
The following three pairs of diffeomorphic complex 3-dim complete intersections have different Chern number $c_1^3$, $c_1c_2$ and different Hodge numbers $h^{0,3}$,  $h^{1,2}$ (\Cref{C3CI}). 
\end{example}	
\begin{table}[h]
\caption{$c_1^3, c_1c_2, h^{p,q}$ of diffeomorphic $3$-dim complete intersections}\label{C3CI} 
 \begin{tabular}{ccc}
  \toprule
  \null & $X_3(70, 16, 16, 14, 7, 6)$ & $X_3(56, 49, 8, 6, 5, 4, 4)$   \\
  \midrule
 $c_1^3$ & -17756452976640 & -18666867394560 \\
 $c_1c_2$ & -12441178337280 & -12956267089920 \\
 $h^{0,3}=h^{3,0}$ &  518382430721 &  539844462081 \\
 $h^{1,2}=h^{2,1}$ & 3365330286081 & 3343868254721 \\
  \bottomrule \\
  \toprule
  \null & $X_3(88, 28, 19, 14, 6, 6)$ & $X_3(76,56,11,7,6,6,2)$   \\
  \midrule
 $c_1^3$ & -81237337784064 & -84508254851328 \\
 $c_1c_2$ & -56913522817536 & -58764807230976 \\
 $h^{0,3}=h^{3,0}$ & 2371396784065 & 2448533634625 \\
 $h^{1,2}=h^{2,1}$ & 15351477911617 & 15274341061057 \\
  \bottomrule\\
  \toprule
  \null & $X_3(84, 29, 25, 25, 18, 7)$ & $X_3(60,58,49,9,5,5,5)$   \\
  \midrule
 $c_1^3$ & -1081901824920000 & -1118781720000000 \\
 $c_1c_2$ & -703318138110000 &  -723582436500000 \\
 $h^{0,3}=h^{3,0}$ & 29304922421251 & 30149268187501 \\
 $h^{1,2}=h^{2,1}$ & 162963432843751 & 162119087077501 \\
  \bottomrule
 \end{tabular}
\end{table}

\begin{example}\label{5-fold}
The complete intersections $X_{5}(66,56,45,39,16,15,8,3)$ and $X_{5}(64,60,42,39,20,11,9,3)$ are diffeomorphic, but they have different Hodge numbers $h^{1,4}=h^{4,1}$ and $h^{2,3}=h^{3,2}$ (\Cref{C5CI}). 
\end{example}

\begin{table}[h]
\caption{$h^{p,q}$ of diffeomorphic $5$-dim complete intersections} 
\label{C5CI}
\begin{tabular}{ccc}
  \toprule
  \null & $X_5(66,56,45,39,16,15,8,3)$ & $X_5(64,60,42,39,20,11,9,3)$   \\
  \midrule
 $h^{0,5}=h^{5,0}$ & 12092504413135386241 & 12092504413135386241 \\
 $h^{1,4}=h^{4,1}$ & 142963762408652465281 & 142965808209158014081 \\
 $h^{2,3}=h^{3,2}$ & 433117264328460391681 & 433115218527954842881 \\
  \bottomrule
 \end{tabular}
\end{table}

\begin{proposition}\label{prop-cn3}
For complex $3$-dim complete intersections and any $a,b\in\mathbb{Q}$ with $(a,b)\neq(0,0)$, the linear combination of Chern numbers represented as $ac_1^3+bc_1c_2 $ is not a diffeomorphism invariant. 
\end{proposition}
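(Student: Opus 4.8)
The plan is to argue by contradiction, using the three diffeomorphic pairs recorded in \Cref{C3CI}. Suppose that for some rational $(a,b)\ne(0,0)$ the quantity $ac_1^3+bc_1c_2$ were a diffeomorphism invariant of complex $3$-dimensional complete intersections. Applying this to each diffeomorphic pair $(X,X')$ of \Cref{C3CI} gives the linear relation
\[
a\bigl(c_1^3(X)-c_1^3(X')\bigr)+b\bigl(c_1c_2(X)-c_1c_2(X')\bigr)=0,
\]
so $(a,b)$ is forced to be orthogonal to the difference vector
\[
\Delta(X,X')=\bigl(c_1^3(X)-c_1^3(X'),\ c_1c_2(X)-c_1c_2(X')\bigr)\in\mathbb{Q}^2 .
\]
The key observation is that a single diffeomorphic pair only constrains $(a,b)$ to a line through the origin, hence cannot by itself exclude every nonzero $(a,b)$; what is needed is two pairs whose difference vectors are not proportional, since then $(a,b)$ is pinned to $(0,0)$, contradicting the assumption.

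Concretely, I would take two of the three pairs in \Cref{C3CI}, say $X_3(70,16,16,14,7,6)$ against $X_3(56,49,8,6,5,4,4)$ and $X_3(88,28,19,14,6,6)$ against $X_3(76,56,11,7,6,6,2)$, read off the corresponding difference vectors $\Delta_1$ and $\Delta_2$ from the table, and verify that the $2\times 2$ determinant $\det\!\begin{pmatrix}\Delta_1\\ \Delta_2\end{pmatrix}$ is nonzero. This can be done by brute multiplication of the table entries, but it is cleaner to use the closed formulas $c_1^3=d\,k^3$ and $c_1c_2=\tfrac{d}{2}k(k^2-m)$ from Section~2, where $k=4+r-\sum d_i$ and $m=4+r-\sum d_i^2$: the two members of each pair share the same total degree $d$ (as one checks directly from the listed multidegrees), so $\Delta_i$ is $d_i$ times a small integer vector and the determinant becomes $d_1d_2$ times the determinant of a small integer matrix, whose nonvanishing is immediate. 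With the determinant nonzero, the homogeneous $2\times 2$ system satisfied by $(a,b)$ has only the trivial solution; this contradiction proves the proposition.

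The main obstacle here is not conceptual but purely the arithmetic bookkeeping with the large integers of \Cref{C3CI}, and the factorization through $d$ is what tames it. Two points belong in the final writeup: first, that the diffeomorphism within each pair is an input taken from \cite{Wang,WangDu}, not established here; and second, that the third pair of \Cref{C3CI} is logically redundant for this statement (any two of the three with non-proportional difference vectors would do), serving only as an independent check. It may also be worth recording the structural pattern behind the examples: for all three pairs the total degree $d$ and the quantity $m=4+r-\sum d_i^2$ agree for the two members while $k=4+r-\sum d_i$ differs by exactly $2$, so that substituting $k'=k-2$, $m'=m$ makes each $\Delta(X,X')$ equal to $d\cdot\bigl(6k^2-12k+8,\ 3k^2-6k+4-m\bigr)$, and two such vectors fail to be proportional precisely when $(3k_1^2-6k_1+4)\,m_2\ne(3k_2^2-6k_2+4)\,m_1$, which is readily checked for two of the pairs.
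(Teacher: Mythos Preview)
Your proposal is correct and matches the paper's approach: both deduce from the diffeomorphic pairs in \Cref{C3CI} a homogeneous linear system forcing $(a,b)=(0,0)$. The paper phrases the independence condition as three pairwise distinct ratios $(c_1^3[X_{i2}]-c_1^3[X_{i1}])/(c_1c_2[X_{i2}]-c_1c_2[X_{i1}])$ rather than a nonzero $2\times2$ determinant, but this is equivalent; your structural observation that each pair shares $d$ and $m$ while $k$ drops by $2$ is correct and not recorded in the paper.
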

\begin{proof}
Suppose that there exists certain $a$, $b$ satisfying that $ac_1^3+bc_1c_2$ is  a diffeomorphism invariant. Given the following notations:
\begin{align*}
& X_{11}:=X_3(70,16,16,14,7,6), && X_{12}:=X_3(56,49,8,6,5,4,4); \\
& X_{21}:=X_3(88,28,19,14,6,6), && X_{22}:=X_3(76,56,11,7,6,6,2);\\
& X_{31}:=X_3(84,29,25,25,18,7), && X_{32}:=X_3(60,58,49,9,5,5,5). 
\end{align*}
By \Cref{3-fold}, $X_{i1}$ is diffeomorphic to $X_{i2}$. Let $c_{i_1}\cdots c_{i_r}[X_{ij}]$ be the Chern number of $X_{ij}$, so we have 
\[
ac_1^3[X_{i1}]+bc_1c_2[X_{i1}]=ac_1^3[X_{i2}]+bc_1c_2[X_{i2}], ~i=1,2,3.
\]
That is 
\[
\frac{b}{a}=-\frac{c_1^3[X_{i2}]-c_1^3[X_{i1}]}{c_1c_2[X_{i2}]-c_1c_2[X_{i1}]},~i=1,2,3.
\]
However by \Cref{C3CI},  
\begin{align*}
& \frac{c_1^3[X_{12}]-c_1^3[X_{11}]}{c_1c_2[X_{12}]-c_1c_2[X_{11}]}=\frac{43201}{24442}, \\ & \frac{c_1^3[X_{22}]-c_1^3[X_{21}]}{c_1c_2[X_{22}]-c_1c_2[X_{21}]}=\frac{69313}{39230}, \\ 
& \frac{c_1^3[X_{32}]-c_1^3[X_{31}]}{c_1c_2[X_{32}]-c_1c_2[X_{31}]}=\frac{96124}{52817}.
\end{align*} 
It is a contradiction! So $ac_1^3+bc_1c_2~(a, b\in\qq)$ is not a diffeomorphism invariant for complex $3$-dim complete intersections. 
\end{proof}
In \cite[Theorem 3]{Kot07}, Kotschick gave the following conclusion on the linear combinations of Chern numbers for simply connected $3$-folds. 

\textbf{Theorem 3.} (Kotschick) {\it The only linear combinations of the Chern numbers $c_1^3$ , $c_1 c_2$ and $c_3$ that are invariant under orientation-preserving diffeomorphisms of simply connected projective
algebraic three-folds are the multiples of the Euler characteristic $c_3$}.
\begin{remark}
As an application of \Cref{prop-cn3}, we can give a new proof of \cite[Theorem 3]{Kot07}. Note that the Chern number $c_3$ is the Euler characterisitic, and hence is a diffeomorphism invariant. 
Complete intersections are simply connected projective algebraic varieties. Therefore, by \Cref{prop-cn3}, we have already arrived at the same conclusion as \cite[Theorem 3]{Kot07} of Kotschick's. 
\end{remark}

\end{document}